\newcommand{\eps}{\varepsilon}
\newcommand{\R}{\mathbb{R}}
\newcommand{\RT}{{\mathbb{R}^3}}
\renewcommand{\le}{\leqslant}
\renewcommand{\ge}{\geqslant}
\renewcommand{\a }{\alpha }
\newcommand{\g }{\gamma }
\renewcommand{\l }{\lambda}
\newcommand{\n }{\nabla }
\newcommand{\s }{\sigma }
\renewcommand{\O}{\Omega}
\newcommand{\G}{\Gamma}
\renewcommand{\H}{H^1_0(\O)}
\newcommand{\N}{\mathbb{N}}
\def\bbm[#1]{\mbox{\boldmath $#1$}}
\newcommand{\beq }{\begin{equation}}
\newcommand{\eeq }{\end{equation}}
\newtheorem{theorem}{Theorem}[section]
\newtheorem{lemma}[theorem]{Lemma}
\newtheorem{proposition}[theorem]{Proposition}
\title{
Generalized Schr\"odinger-Poisson type systems \footnote{The first two authors
are supported by M.I.U.R. - P.R.I.N. ``Metodi variazionali e
topologici nello studio di fenomeni non lineari''}}
\author{Antonio Azzollini \thanks{Dipartimento di Matematica ed Informatica, Universit\`a degli
Studi della Basilicata,  Via dell'Ateneo Lucano 10, I-85100 Potenza,
Italy, e-mail: {\tt antonio.azzollini@unibas.it}}
 \; \& \;
 Pietro d'Avenia\thanks{Dipartimento di Matematica, Politecnico di
Bari, Via E. Orabona 4, I-70125 Bari, Italy, e-mail: {\tt
p.davenia@poliba.it}}
\; \& \;
Valeria Luisi}
\date{}
\begin{document}

\maketitle

\begin{abstract}
In this paper we study the boundary value problem
\[
\left\{
\begin{array}{ll}
-\Delta u+ \eps q\Phi f(u)=\eta|u|^{p-1}u & \text{in } \Omega,  \\
- \Delta \Phi=2 qF(u)& \text{in } \Omega,   \\
u=\Phi=0 & \text{on }\partial \Omega,
\end{array}
\right.\] where $\Omega \subset \mathbb{R}^3$ is a smooth bounded
domain, $1 < p < 5$, $\eps ,\eta= \pm 1$, $q>0$, $f:\R\to\R$ is a
continuous function and $F$ is the primitive of $f$ such that
$F(0)=0.$ We provide existence and multiplicity results assuming on
$f$ a subcritical growth condition. The critical case is also
considered and existence and nonexistence results are proved.
\end{abstract}

\noindent{\small \textit{Keywords:} Schr\"odinger-Poisson
equations, variational methods, mountain pass.\newline\textit{2000
MSC:} 35J20, 35J57, 35J60.}

\section{Introduction}

This paper deals with the following problem

\begin{equation} \label{eq:P}\tag{$\mathcal P$}
\left\{
\begin{array}{ll}
-\Delta u+ \eps q\Phi f(u)=\eta|u|^{p-1}u &\text{in } \Omega, \\
-\Delta \Phi=2 qF(u) & \text{in }\Omega, \\
u=\Phi=0 & \text{on }\partial \Omega,
\end{array}
\right.
\end{equation}
where $\Omega \subset \mathbb{R}^3$ is a bounded domain with smooth
boundary $\partial\O$, $1 < p < 5$, $q >0$, $\eps,\eta = \pm 1$,
$f:\R\to\R$ is a continuous function
and $F(s)=\int_0^s f(t)\, dt.$\\
When the function $f(t)=t$ and $\eps=\eta =1$, this system
represents the well known Schr\"odinger-Poisson (or
Schr\"odinger-Maxwell) equations, briefly SPE, that have been
widely studied in the recent past. In the pioneer paper of Benci and
Fortunato \cite{BF}, the \emph{linear} version of SPE (where $\eta=0$)
has been approached as an
eigenvalue problem. In \cite{PS} the authors have proved the
existence of infinitely many solutions for SPE when $p>4$, whereas
in \cite{RS} an analogous result has been found for almost any $q>0$
and $p\in\, ]2,5[$. A multiplicity result has been obtained in
\cite{S} for any $q>0$ and $p$ sufficiently close to the critical
exponent 5, by using the abstract Lusternik-Schnirelmann theory.
For the sake of completeness we mention also \cite{PS0} where
Neumann condition on $\Phi$ is assumed on $\partial \Omega$,
\cite{A} and the references within for results on SPE in unbounded
domains and \cite{DPS1,DPS2} for the Klein-Gordon-Maxwell system in
a bounded domain.
\\
If $f(t)=t$ and $\eps =-1$,  the system is equivalent to a nonlocal
nonlinear problem related with the following well known Choquard
equation in the whole space $\R^3$
    \begin{equation*}
        \Delta u + u -\left(\frac 1 {|x|}*u^2\right)u=0.
    \end{equation*}
We refer to \cite{Lieb,Lions} for more details on the Choquard
equation and to \cite{Mu} for a recent result on a system in $\R^3$
strictly related with ours.\\
Up to our knowledge, problem \eqref{eq:P} has not been investigated
when a more general function $f$ appears instead of the identity.
Since problem \eqref{eq:P} possesses a
variational structure, our aim is to find weak assumptions on $f$ in
order to apply the usual variational techniques. In particular, the
first step in a classical approach to such a type of systems
consists in the use of the reduction method. To this end, we need to
assume suitable growth conditions on $f$ which allow us to invert
the Laplace operator and thus to solve the second equation of the
system. Then, after we have reduced the problem to \emph{a single
equation}, we find critical points of a one variable functional,
checking geometrical and compactness assumptions of the Mountain
Pass Theorem. If on one hand a suitable use of some {\it a priori}
estimates makes quite immediate to show that geometrical hypotheses
are verified (at least for small $q$), on the other some technical
difficulties arise in getting boundedness for the Palais-Smale
sequences. If $\eta=1$, we use a suitable truncation
argument based on an idea of Berti and Bolle \cite{BB} and Jeanjean
and Le Coz \cite{JC} (see also \cite{ADP,K2}) and we are
able to show the existence of a bounded Palais-Smale sequence of the
functional taking $q$ sufficiently small. If we had the sufficient
compactness, we would conclude by extracting any strongly convergent
subsequence from this bounded Palais-Smale sequence. However the
growth hypothesis we assume on $f$ does not permit to deduce
compactness on the variable $\Phi$. Indeed the exponent $4$ turns
out to be \emph{critical} and in this sense we are justified to refer to
\eqref{ipotesi} as the {\it critical growth
condition} for the function $f.$ \\
The first result in this paper is the following.

\begin{theorem} \label{main}
Let $f:\mathbb{R} \rightarrow \mathbb{R}$ be a
continuous function satisfying
\begin{equation} \label{ipotesi} \tag{$\mathbf{f}$}
|f(s)| \le c_1+c_2 |s|^{4}
\end{equation}
for all $s\in\R.$ Then, there exists $\bar{q}>0$ such that for all
$0 < q \le \bar{q}$ problem
\begin{equation}
\tag{$\mathcal P_f$}
\left\{
\begin{array}{ll}
-\Delta u+ \eps q\Phi f(u)=|u|^{p-1}u &\text{in } \Omega, \\
-\Delta \Phi=2 qF(u) & \text{in }\Omega, \\
u=\Phi=0 & \text{on }\partial \Omega,
\end{array}
\right.
\label{P}
\end{equation}
has at least a nontrivial
solution.
\end{theorem}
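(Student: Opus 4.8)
The plan is to use the classical reduction method followed by a truncated Mountain Pass argument. First I would fix $u \in \HO$ and solve the second equation: since $f$ satisfies \eqref{ipotesi}, one checks that $F(s) \le c_1|s| + (c_2/5)|s|^5$, so $F(u) \in L^{6/5}(\Omega)$ for $u \in \HO \hookrightarrow L^6(\Omega)$, and hence $-\Delta\Phi = 2qF(u)$ has a unique solution $\Phi_u \in \HO$ by the Lax-Milgram theorem (or direct minimization). The map $u \mapsto \Phi_u$ is of class $C^1$, and $\Phi_u \ge 0$ is not guaranteed in general (unlike the model case), but this sign issue is handled by the exponent bookkeeping rather than a maximum principle. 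Substituting $\Phi_u$ into the first equation, solutions of \eqref{P} correspond to critical points of the reduced functional
\[
I_q(u) = \frac{1}{2}\irn |\nabla u|^2 + \frac{\eps q}{2}\int_\Omega \Phi_u f(u)\,u\,dx \;-\;\text{(a term from }F\text{)}\;-\;\frac{1}{p+1}\int_\Omega |u|^{p+1},
\]
more precisely, using the variational identity for $\Phi_u$, the nonlocal term collapses to something controlled by $\|\Phi_u\|^2 \sim q^2\|F(u)\|_{6/5}^2$, which by the growth bound is $O(q^2\|u\|^{10})$ plus lower-order terms — the appearance of the tenth power (equivalently, the exponent $4$ in $f$) is exactly what makes $4$ critical.

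Next I would verify the Mountain Pass geometry. The subcritical term $-\frac{1}{p+1}\int|u|^{p+1}$ with $1<p<5$ dominates negatively along rays, giving $I_q(tu)\to-\infty$, so the "valley" condition holds. For the local minimum at the origin, one uses Sobolev embeddings together with the {\it a priori} estimate $\|\Phi_u\| \lesssim q\|u\|^5$ (up to lower order) to show that the nonlocal term is a higher-order perturbation of $\frac{1}{2}\|u\|^2$ near $0$; here taking $q$ small makes the perturbation harmless and one gets $I_q(u) \ge \alpha > 0$ on a small sphere. This already forces $\bar q$ to be chosen small.

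The main obstacle is the Palais-Smale condition, and this is where the truncation trick of Berti-Bolle \cite{BB} and Jeanjean-Le Coz \cite{JC} enters. Because $p$ can be as small as just above $1$ and because the nonlocal term carries an unfavorable sign when $\eps=-1$ (or even when $\eps=1$, since $\Phi_u$ need not be sign-definite), the naive manipulation $2I_q(u) - \langle I_q'(u),u\rangle$ does not obviously bound $\|u\|$. So I would introduce a cutoff function $\chi$ and a truncated functional $I_q^T$ that coincides with $I_q$ on a large ball and is modified outside, designed so that (i) it still has the Mountain Pass geometry with the same mountain pass level for $q$ small, and (ii) its Palais-Smale sequences are automatically bounded. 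One then obtains a bounded PS sequence $(u_n)$ for $I_q^T$ at the mountain pass level; passing to a subsequence, $u_n \weakto u$ in $\HO$, and $u_n \to u$ strongly in $L^{p+1}$ by Rellich. The delicate point is passing to the limit in the nonlocal term $\int\Phi_{u_n}f(u_n)u_n$: since the exponent $4$ is critical for this term, strong convergence of $\Phi_{u_n}$ in $\HO$ is {\it not} available, but one shows $\Phi_{u_n}\weakto\Phi_u$ and that the defect of compactness only affects $\Phi$, not $u$ — so $u_n\to u$ strongly in $\HO$, $u$ is a critical point of $I_q^T$, and because $u$ lies in the region where $I_q^T = I_q$ (by the level estimate and $q$ small), $u$ solves \eqref{P}. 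Finally one checks $u\ne 0$ from $I_q(u) \ge \alpha > 0$. Choosing $\bar q$ to be the minimum of the finitely many smallness thresholds that appeared completes the proof.
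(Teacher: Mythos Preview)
Your overall strategy (reduction, truncation, mountain pass) is the right one and matches the paper, but there are two genuine gaps.

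\medskip
\textbf{1. The ``valley'' for $I_q$ can fail.} You write that $I_q(tu)\to-\infty$ because the term $-\tfrac{1}{p+1}\|tu\|_{p+1}^{p+1}$ dominates. But recall that $\int_\Omega F(u)\Phi_u\,dx\ge 0$ always, and under \eqref{ipotesi} the nonlocal term can grow like $t^{10}$ along rays (take $f(s)\sim s^4$). For $\eps=1$ this term is \emph{positive}, so $I_q(tu)$ need not go to $-\infty$ at all, since $p+1<6<10$. The paper therefore does \emph{not} verify mountain-pass geometry for $I_q$: it checks it directly for the truncated functional $I_q^T$, where the cutoff kills the nonlocal term for large $t$ and the valley becomes trivial. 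Your truncation is introduced only to repair Palais--Smale, but in fact it is already needed for the geometry.

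\medskip
\textbf{2. Strong convergence in $H^1_0$ is not available, and your nontriviality argument depends on it.} You assert that ``the defect of compactness only affects $\Phi$, not $u$ --- so $u_n\to u$ strongly in $\HO$.'' This is exactly the step that breaks down at the critical exponent $4$: from $\langle I_q'(u_n),u_n-u_0\rangle\to 0$ one is left with the term $\int_\Omega \Phi_{u_n}f(u_n)(u_n-u_0)\,dx$, where $\Phi_{u_n}f(u_n)$ is merely bounded in $L^{6/5}$ and $u_n-u_0$ only converges weakly in $L^6$, so this term need not vanish. Consequently you cannot conclude $I_q(u)=m_q\ge\alpha>0$, and your nontriviality argument collapses. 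The paper handles both issues differently: it never claims strong convergence of $u_n$, but instead (a) passes to the limit in $\langle I_q'(u_n),\psi\rangle$ for fixed $\psi\in C_0^\infty(\Omega)$ using only weak convergence (and the weak continuity $F(u_n)\rightharpoonup F(u_0)$, $f(u_n)u_n\rightharpoonup f(u_0)u_0$ in $L^{6/5}$), which identifies $(u_0,\Phi_{u_0})$ as a weak solution; and (b) rules out $u_0=0$ by a separate quantitative argument: if $u_0=0$ then $u_n\to 0$ in $L^{p+1}$, and from $\langle I_q'(u_n),u_n\rangle\to 0$ one gets
\[
l_q:=\lim_n\|\nabla u_n\|_2^2=-\lim_n \eps q\int_\Omega f(u_n)u_n\Phi_{u_n}\,dx\le q^2\big(C_1 l_q^2+C_2 l_q^{10}\big),
\]
while $0<l_q\le\bar T$ by the mountain-pass level and the truncation bound; this forces $1\le q^2(C_1\bar T+C_2\bar T^9)$, a contradiction for $q$ small. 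This is where a \emph{second} smallness threshold on $q$ enters, beyond the one needed for geometry --- something your sketch misses entirely.
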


Inspired by \cite{Mu}, in the second part of this paper we  study
\eqref{eq:P} with $\eps=\eta=-1$ and $f(s)= |s|^{r-2}s.$ The system
becomes
\begin{equation} \label{Pq}\tag{$\mathcal P_r$}
\left\{
\begin{array}{ll}
    -\Delta u - q |u|^{r-2} u \Phi + |u|^{p-1} u=0
    &
    \hbox{in } \Omega,\\
    -r\Delta \Phi=2q |u|^r
    &
    \hbox{in } \Omega,\\
    u=\Phi=0
    &
    \hbox{on } \partial \Omega.
\end{array}
\right.
\end{equation}
where $1< r $ and $1<p<5$.

In this situation the contrasting nonlocal and local nonlinear terms
perturb the functional in a way which in some sense recalls the
typical concave-convex power-like nonlinearity. As a consequence,
the functional's geometry depends on the ratio of magnitude between
$r$ and $p$. We get the following result.
    \begin{theorem}\label{th:r}
        If $\frac {p+1}2<r<5$, problem \eqref{Pq}
        has infinitely many solutions for any $q>0.$\\
        If $r=\frac{p+1}2,$  then there exists an increasing sequence $(q_n)_n$
        such that, if $q\ge q_n$, problem \eqref{Pq} has at
        least $n$ couples of solutions.\\
        If $1<r<\frac{p+1}2,$ then there exists an increasing sequence $(q_n)_n$
        such that, if $q\ge q_n$, problem \eqref{Pq} has at
        least $2n$ couples of solutions.\\
        Finally, if $p<5\le r$, then the problem has no nontrivial solution.
    \end{theorem}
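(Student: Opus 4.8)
The plan is to perform the classical reduction and then apply $\mathbb{Z}_2$-equivariant critical point theory, distinguishing the four ranges of $r$.

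\emph{Reduction.} For $1<r\le 5$ and every $u\in H^1_0(\Omega)$ one has $|u|^r\in L^{6/5}(\Omega)$, so by Lax--Milgram the second equation of $(\mathcal P_r)$ has a unique solution $\Phi_u=\frac{2q}{r}(-\Delta)^{-1}|u|^r\ge 0$ in $H^1_0(\Omega)$ (inverse Laplacian with Dirichlet conditions). For $r<5$ the embedding $H^1_0(\Omega)\hookrightarrow L^{6r/5}(\Omega)$ is compact, $u\mapsto\Phi_u$ is $C^1$, and the reduced functional
\[
I_q(u)=\frac12\int_\Omega|\nabla u|^2+\frac1{p+1}\int_\Omega|u|^{p+1}-\frac{q^2}{r^2}\,\mathcal D(u),\qquad \mathcal D(u):=\int_\Omega|u|^r(-\Delta)^{-1}|u|^r,
\]
is even, of class $C^1$ on $H^1_0(\Omega)$, with $(u,\Phi_u)$ solving $(\mathcal P_r)$ iff $I_q'(u)=0$. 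Here $\mathcal D\ge0$ is $(2r)$-homogeneous and $\mathcal D(u)\le C\|u\|_{6r/5}^{2r}\le C'\|\nabla u\|_2^{2r}$. A Palais--Smale sequence at any level is bounded: if $r\ge\frac{p+1}2$,
\[
I_q(u_n)-\tfrac1{2r}I_q'(u_n)[u_n]=\big(\tfrac12-\tfrac1{2r}\big)\|\nabla u_n\|_2^2+\big(\tfrac1{p+1}-\tfrac1{2r}\big)\|u_n\|_{p+1}^{p+1}
\]
has both coefficients positive; if $r<\frac{p+1}2$, $I_q$ is coercive because Young's inequality absorbs $\mathcal D(u)$ into the $\|u\|_{p+1}^{p+1}$ term. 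Compactness of a bounded $(PS)$ sequence then uses the compact embeddings into $L^{p+1}$ and $L^{6r/5}$ — which is exactly where $r<5$ is needed, and why $r=5$ is excluded from the existence claims.

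\emph{Geometry and the cases $r\ge\frac{p+1}2$.} Since $2r>2$, near the origin $I_q(u)\ge\frac12\|\nabla u\|_2^2-C'\frac{q^2}{r^2}\|\nabla u\|_2^{2r}\ge\frac14\|\nabla u\|_2^2$ on a small sphere $\|\nabla u\|_2=\rho_q$, while $I_q>0$ for $0<\|\nabla u\|_2\le\rho_q$. If $\frac{p+1}2<r<5$, then $2<p+1<2r$ and, since $\mathcal D(\varphi)>0$ for $\varphi\ne0$, $I_q(t\varphi)\to-\infty$ as $t\to+\infty$; hence $I_q\to-\infty$ on every finite-dimensional subspace and the symmetric Mountain Pass Theorem yields an unbounded sequence of critical values, i.e.\ infinitely many solutions, for every $q>0$. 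If $r=\frac{p+1}2$, then $2r=p+1$, $I_q(u)=\frac12\|\nabla u\|_2^2+\big(\frac1{p+1}\|u\|_{p+1}^{p+1}-\frac{q^2}{r^2}\mathcal D(u)\big)$ with $(p+1)$-homogeneous bracket, and $\mathcal D(u)\le\lambda_1^{-1}\|u\|_{2r}^{2r}=\lambda_1^{-1}\|u\|_{p+1}^{p+1}$ ($\lambda_1$ the first Dirichlet eigenvalue). Setting
\[
\mu_n:=\inf\Big\{\sup_{u\in V\setminus\{0\}}\frac{\|u\|_{p+1}^{p+1}}{\mathcal D(u)}\ :\ \dim V=n\Big\},\qquad q_n:=r\sqrt{\mu_n/(p+1)},
\]
one has $\lambda_1\le\mu_1\le\mu_2\le\cdots<\infty$, so $(q_n)$ is positive and increasing, and for $q\ge q_n$ there is an $n$-dimensional $V$ on which the bracket is negative off $0$, whence $I_q\to-\infty$ along every ray of $V$ (as $p+1>2$); the finite-dimensional version of the symmetric Mountain Pass Theorem then gives at least $n$ pairs of nontrivial critical points, all at positive levels.

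\emph{The concave-like case $1<r<\frac{p+1}2$.} Here $2<2r<p+1$, the top-order term is positive, so $I_q$ is even, coercive, bounded below and satisfies $(PS)$. By a similar min-max construction one gets an increasing sequence $(q_n)$ such that for $q\ge q_n$ there exist an $n$-dimensional $V$ and a radius $\rho>\rho_q$ with $I_q<0$ on $S:=\{\|\nabla u\|_2=\rho\}\cap V$; as $S$ is a compact symmetric set with $\gamma(S)=n$ and $\sup_S I_q<0$, Clark's theorem produces $n$ pairs of critical points at \emph{negative} levels. Since also $I_q\ge\frac14\rho_q^2>0$ on $\{\|\nabla u\|_2=\rho_q\}$ while $I_q<0$ somewhere beyond it and $I_q\to+\infty$ at infinity, the Mountain Pass geometry holds on each such $V$ and yields $n$ further pairs at \emph{positive} levels; living at energies of opposite sign, the two families are distinct, for a total of at least $2n$ pairs when $q\ge q_n$.

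\emph{Nonexistence, and the main difficulty.} Let $p<5\le r$ and let $(u,\Phi)$ be a smooth solution. Testing the two equations with $u$ and with $\Phi$ gives $\|\nabla u\|_2^2+\|u\|_{p+1}^{p+1}=\frac r2\|\nabla\Phi\|_2^2$. Multiplying the first equation by $x\cdot\nabla u$ and the second by $x\cdot\nabla\Phi$, integrating over $\Omega\subset\R^3$, and eliminating $\int|u|^r(x\cdot\nabla\Phi)$ and $\int|u|^r\Phi$ via the second equation, one arrives at
\[
\Big(\tfrac5{2r}-\tfrac12\Big)\|\nabla u\|_2^2+\Big(\tfrac5{2r}-\tfrac3{p+1}\Big)\|u\|_{p+1}^{p+1}=\tfrac12\!\int_{\partial\Omega}|\partial_\nu u|^2(x\cdot\nu)\,d\sigma+\tfrac14\!\int_{\partial\Omega}|\partial_\nu\Phi|^2(x\cdot\nu)\,d\sigma.
\]
For $p<5\le r$ both coefficients on the left are $\le 0$, the one of $\|u\|_{p+1}^{p+1}$ strictly negative, so the left side is $\le0$ and vanishes only for $u\equiv0$; once the right side is $\ge0$ — in particular for $\Omega$ star-shaped — this forces $u\equiv0$, hence $\Phi\equiv0$. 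I expect the real obstacles to be: verifying $(PS)$, whose breakdown at $r=5$ explains why that value lies in the nonexistence range; in the two small-$r$ regimes, organizing the minimax families so as to extract exactly $n$ (resp.\ $2n$) pairs and to identify the thresholds $q_n$; and, for the nonexistence, controlling the sign of the boundary integral in the Pohozaev identity.
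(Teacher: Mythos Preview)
Your proposal is correct and follows essentially the same route as the paper: reduction to the even functional, boundedness of Palais--Smale sequences via the combination $I_q-\tfrac{1}{2r}I_q'[\cdot]$ (the paper uses $I_{q,r}-\tfrac{1}{p+1}I_{q,r}'[\cdot]$ instead, but either multiplier works), symmetric mountain-pass theory of Ambrosetti--Rabinowitz for the three subcritical ranges, and a Poho\v{z}aev identity for nonexistence. Two minor points of contrast are worth recording. First, in the case $1<r<\tfrac{p+1}{2}$ the paper only invokes \cite[Corollary~2.24]{AR} together with boundedness from below, which strictly produces the $n$ pairs at negative levels; your Clark-plus-linking argument explicitly supplies the additional $n$ positive-level pairs and thus justifies the ``$2n$'' more fully---though you should cite a precise symmetric linking theorem valid for coercive even functionals, since the standard Theorem~2.23 in \cite{AR} requires $I\to-\infty$ on finite-dimensional subspaces. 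Second, your Poho\v{z}aev identity is expressed in $\|\nabla u\|_2^2$ while the paper eliminates in favour of $\|\nabla\Phi\|_2^2$; the two forms are equivalent via the Nehari relations \eqref{eq:Ne1}--\eqref{eq:Ne2}, and both need the star-shapedness of $\Omega$ that the paper states in its proof but not in the theorem.
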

The paper is organized as follows: in Section \ref{VT} we introduce
the functional setting where we study the problem \eqref{P} and the
variational tools we use; in Section \ref{proof} we provide the
proof of Theorem \ref{main}; in Section \ref{sec:positive} we
consider the system \eqref{Pq} and prove Theorem \ref{th:r}.

Throughout the paper we will use the symbols $H^{-1}$ to denote the
dual space of $H^1_0(\Omega)$, $\langle \cdot,\cdot \rangle$ to
denote the duality between $H^1_0(\Omega)$ and $H^{-1}$ and
$\|u\|_{H^1_0}:=\left(\int_{\O}|\nabla u|^2\right)^{\frac 1 2}$ for the norm on $H^1_0(\Omega)$.
Moreover $\|\cdot\|_p$ will denote the usual $L^p(\O)$-norm.

We point out the fact that in the sequel we will use the symbols
$C,$ $C_1,$ $C_2,$ $C_3$ and so on, to denote positive constants
whose value might change from line to line.

\section{Variational tools}\label{VT}

Standard arguments can be used to prove that problem \eqref{P} is
variational and the related $C^1$ functional $J_q:H^1_0(\Omega) \times
H^1_0(\Omega) \rightarrow \mathbb{R}$ is given by
\begin{equation*}
J_q(u,\Phi)=\frac{1}{2}\int_{\Omega} |\nabla
u|^2dx-\frac{\eps}{4}\int_{\Omega} |\nabla \Phi|^2dx+\eps q \int_{\Omega}
F(u)\Phi dx-\frac{1}{p+1}\int_{\Omega} |u|^{p+1}dx.
\end{equation*}

Since, by \eqref{ipotesi}, $F:L^{6}(\Omega) \to L^{\frac{6}{5}}(\Omega)\hookrightarrow H^{-1}$, then certainly we have that for all $u \in
H^1_0(\Omega)$ there exists a unique $\Phi_u \in
H^1_0(\Omega)$ which solves
\begin{equation*}
-\Delta \Phi=2 qF(u) \label{2eq}
\end{equation*}
in $H^{-1}.$
In particular, we are allowed to consider the following map
\begin{equation*}%\label{eq:phi}
u \in L^{6}(\Omega) \mapsto \Phi_u \in H^1_0(\Omega)
\end{equation*}
which is continuously differentiable by the Implicit Function Theorem applied to $\partial_\Phi H$ where, for any $(u,\Phi)\in L^{6}(\Omega)\times\H$,
    \begin{equation*}
        H(u,\Phi) :=
        \frac 1 4 \int_{\O}|\n \Phi|^2 dx -q\int_{\O}F(u)\Phi dx.
    \end{equation*}

Since, for every $u\in\H$,
\begin{equation}
\partial_{\Phi} J_q(u,\Phi_{u})=- \eps\partial_\Phi H(u,\Phi_{u})
=0,
\label{eq:J'}
\end{equation}
then
\begin{equation}
        \int_\O |\n \Phi_u|^2\, dx=2 q\int_\O F(u)\Phi_u\, dx \label{svista}
    \end{equation}
and
\begin{equation}
\int_{\Omega} F(u)\Phi_u dx \ge 0.
\label{eq:pos}
\end{equation}
Moreover, we have the following estimates.
\begin{lemma}
For every $u\in\H$
\begin{equation}
\left(\int_{\Omega} |\n\Phi_u|^2dx\right)^{1/2}\le Cq
\left(\int_{\Omega} |F(u)|^{6/5}dx\right)^{5/6} \label{Holder2}
\end{equation}
and
\begin{equation}\label{eq:control}
        \int_\O F(u)\Phi_u dx
        \le q( C_1\|u\|_{ 6/5}^2+C_2\|u\|^{10}_{6}).
\end{equation}
\end{lemma}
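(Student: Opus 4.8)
The plan is to derive both estimates from the variational characterization of $\Phi_u$ together with Sobolev embeddings in dimension $3$, where $H^1_0(\O)\hookrightarrow L^6(\O)$ and hence $L^{6/5}(\O)\hookrightarrow H^{-1}(\O)$. For the first estimate \eqref{Holder2}, I would test the equation $-\Delta\Phi_u=2qF(u)$ against $\Phi_u$ itself, which gives exactly the identity \eqref{svista}, namely $\int_\O|\n\Phi_u|^2\,dx=2q\int_\O F(u)\Phi_u\,dx$. Applying H\"older's inequality with exponents $6/5$ and $6$ to the right-hand side yields $\int_\O|\n\Phi_u|^2\,dx\le 2q\|F(u)\|_{6/5}\|\Phi_u\|_6$, and then the Sobolev inequality $\|\Phi_u\|_6\le C\left(\int_\O|\n\Phi_u|^2\,dx\right)^{1/2}$ lets me absorb one power of $\left(\int_\O|\n\Phi_u|^2\,dx\right)^{1/2}$ from the left, producing \eqref{Holder2} with a constant depending only on the Sobolev constant.

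For the second estimate \eqref{eq:control}, I would again start from \eqref{svista} and then from \eqref{Holder2}: combining them gives
\[
\int_\O F(u)\Phi_u\,dx=\tfrac12\int_\O|\n\Phi_u|^2\,dx\le \tfrac12 Cq\left(\int_\O|F(u)|^{6/5}\,dx\right)^{5/6}\|\Phi_u\|_6\le C'q^2\left(\int_\O|F(u)|^{6/5}\,dx\right)^{5/3}.
\]
Hmm — this produces a $q^2$, so to land on the stated form with a single $q$ I instead keep \eqref{svista} and estimate directly: $\int_\O F(u)\Phi_u\,dx\le \|F(u)\|_{6/5}\|\Phi_u\|_6$, and by \eqref{Holder2} we have $\|\Phi_u\|_6\le C q\|F(u)\|_{6/5}$, so again a $q^2$ appears. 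The cleanest route matching the statement is: bound $\|\Phi_u\|_6\le C\|F(u)\|_{6/5}\cdot q$ is \emph{not} what we want to plug in twice; rather, use \eqref{svista} to write $\int_\O F(u)\Phi_u\,dx=\frac1{2q}\int_\O|\n\Phi_u|^2\,dx$ and then \eqref{Holder2} squared gives $\int_\O|\n\Phi_u|^2\,dx\le C^2q^2\|F(u)\|_{6/5}^2$, whence $\int_\O F(u)\Phi_u\,dx\le \frac{C^2 q}{2}\|F(u)\|_{6/5}^2$, a clean single power of $q$.

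It then remains to control $\|F(u)\|_{6/5}^2$ by the quantity $C_1\|u\|_{6/5}^2+C_2\|u\|_6^{10}$. From the growth hypothesis \eqref{ipotesi}, integrating $|f|$ gives $|F(s)|\le c_1|s|+\frac{c_2}{5}|s|^5$, so $|F(u)|\le c_1|u|+C|u|^5$ pointwise. Taking the $L^{6/5}$ norm and using the triangle inequality, $\|F(u)\|_{6/5}\le c_1\|u\|_{6/5}+C\|u^5\|_{6/5}=c_1\|u\|_{6/5}+C\|u\|_6^5$; squaring and using $(a+b)^2\le 2a^2+2b^2$ delivers $\|F(u)\|_{6/5}^2\le C_1'\|u\|_{6/5}^2+C_2'\|u\|_6^{10}$, which combined with the previous display and relabeling constants gives \eqref{eq:control}. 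The only mild subtlety to watch is the bookkeeping of the powers of $q$ so that the final inequality carries exactly one factor of $q$ as stated; routing through \eqref{svista} and \eqref{Holder2} rather than applying H\"older twice handles this, and the rest is a direct Sobolev/H\"older computation with no real obstacle.
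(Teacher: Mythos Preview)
Your proof is correct and follows essentially the same route as the paper: test the Poisson equation against $\Phi_u$ to get \eqref{svista}, apply H\"older with exponents $6/5$ and $6$ together with the Sobolev embedding $\H\hookrightarrow L^6(\O)$ to obtain \eqref{Holder2}, and then combine these to get $\int_\O F(u)\Phi_u\,dx\le Cq\|F(u)\|_{6/5}^2$ with a single power of $q$. The only cosmetic difference is that the paper bounds $|F(s)|^{6/5}\le C_1|s|^{6/5}+C_2|s|^6$ pointwise and then raises the integral to the $5/3$ power, whereas you use the triangle inequality on $\|F(u)\|_{6/5}$ and square; both yield the same final estimate.
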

\begin{proof}
By Holder inequality we have
\begin{equation} \int_{\Omega} F(u)\Phi_u dx\le \left(\int_{\Omega}
|F(u)|^{6/5}dx\right)^{5/6}\left(\int_{\Omega}
|\Phi_u|^{6}dx\right)^{1/6} .\label{Holder1}
\end{equation}
Then, from \eqref{svista}, by using Sobolev embedding $\H\hookrightarrow L^6(\O)$, we get
\begin{align*}
\int_{\Omega} |\nabla \Phi_u |^2 dx&= 2 q \int_{\Omega} F(u) \Phi_u dx
%\\
%&
\le 2 q \left(\int_{\Omega}
|F(u)|^{6/5}dx\right)^{5/6}\left(\int_{\Omega}
|\Phi_u|^{6}dx\right)^{1/6}
\\ &
\le C q  \left(\int_{\Omega} |F(u)|^{6/5}dx\right)^{5/6}\left(\int_{\Omega}
|\nabla \Phi_u|^{2}dx \right)^{1/2}
\end{align*}
and then we get \eqref{Holder2}.\\
By \eqref{Holder1} and \eqref{Holder2} we have
\begin{equation*}%\label{eq:ineq}
        \int_{\Omega} F(u) \Phi_u dx \le C q \left(\int_{\Omega}|F(u)|^{6/5}dx\right)^{5/3}
    \end{equation*}
and then, since for \eqref{ipotesi} it is
    \begin{equation}
        |F(s)|^{6/5} \le C_1 |s|^{6/5}+C_2|s|^{6}, \label{crescita}
    \end{equation}
for all $s \in \mathbb{R}$, we get \eqref{eq:control}.
\end{proof}

Equation \eqref{svista} allows us to define on $H^1_0(\Omega)$ the $C^1$ one variable functional
        \begin{equation*}
        I_q(u):=J_q(u,\Phi_u)=
        \frac{1}{2}\int_{\Omega} |\nabla u|^2dx+ \eps \frac{q}{2}
        \int_{\Omega} F(u)\Phi_u dx-\frac{1}{p+1}\int_{\Omega} |u|^{p+1}dx.
        %\label{functional}
        \end{equation*}

By using standard variational arguments as those in \cite{BF}, the
following result can be easily proved.
\begin{proposition}%\label{prop:varia}
Let $(u,\Phi)\in H^1_0(\Omega)\times
H^1_0(\Omega)$, then the following propositions are equivalent:
\begin{enumerate}[label=(\alph*), ref=\alph*]
\item $(u,\Phi)$ is a critical point of functional $J_q$;
\item $u$ is a critical point of functional $I_q$ and
$\Phi=\Phi_u$.
\end{enumerate}
\end{proposition}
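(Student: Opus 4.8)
The plan is to reduce everything to the chain rule together with the identity \eqref{eq:J'} already at our disposal. Since the map $u\in L^{6}(\O)\mapsto\Phi_u\in\H$ is continuously differentiable and $J_q$ is $C^1$ on $\H\times\H$ (this is where \eqref{ipotesi} enters, exactly as in the preliminary discussion), the composition $I_q(u)=J_q(u,\Phi_u)$ is $C^1$ and, for all $u,v\in\H$,
\[
I_q'(u)[v]=\partial_u J_q(u,\Phi_u)[v]+\partial_\Phi J_q(u,\Phi_u)\bigl[\Phi_u'(u)[v]\bigr]=\partial_u J_q(u,\Phi_u)[v],
\]
the second equality being precisely \eqref{eq:J'}. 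Hence $I_q'(u)=0$ if and only if $\partial_u J_q(u,\Phi_u)=0$. I would also record the elementary fact that $\partial_\Phi J_q(u,\Phi)=0$ forces $\Phi=\Phi_u$: setting the $\Phi$-derivative to zero gives $\int_\O\nabla\Phi\cdot\nabla\psi\,dx=2q\int_\O F(u)\psi\,dx$ for every $\psi\in\H$, so $\Phi$ solves $-\Delta\Phi=2qF(u)$ in $H^{-1}$, and such a solution is unique.

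For the implication (a)$\Rightarrow$(b): if $(u,\Phi)$ is a critical point of $J_q$, then in particular $\partial_\Phi J_q(u,\Phi)=0$, so by the previous remark $\Phi=\Phi_u$; moreover $\partial_u J_q(u,\Phi_u)=\partial_u J_q(u,\Phi)=0$, which by the displayed identity means $I_q'(u)=0$. For the converse (b)$\Rightarrow$(a): if $I_q'(u)=0$ and $\Phi=\Phi_u$, then $\partial_\Phi J_q(u,\Phi_u)=0$ by \eqref{eq:J'}, while $\partial_u J_q(u,\Phi_u)=I_q'(u)=0$ by the displayed identity; thus both partial derivatives of $J_q$ vanish at $(u,\Phi_u)=(u,\Phi)$, i.e.\ this point is a critical point of $J_q$.

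There is no real obstacle here: the argument is purely formal once the two ingredients used in the chain rule — the joint $C^1$-regularity of $J_q$ and the differentiability of $u\mapsto\Phi_u$ — are available, and both have been secured above. The only care needed is in the bookkeeping of the two implications and in invoking uniqueness for the Poisson equation to identify $\Phi$ with $\Phi_u$.
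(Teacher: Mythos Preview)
Your argument is correct and is precisely the standard reduction that the paper defers to by citing \cite{BF}: the chain rule together with \eqref{eq:J'} gives $I_q'(u)=\partial_u J_q(u,\Phi_u)$, and uniqueness for the Poisson equation identifies $\Phi$ with $\Phi_u$ whenever $\partial_\Phi J_q(u,\Phi)=0$. There is nothing to add.
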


So we are led to look for critical points of $I_q.$ To this end,
we need to investigate the compactness property of its
Palais-Smale
sequences.\\
It is easy to see that the standard arguments used to prove boundedness do
not work. Indeed, assuming that $(u_n)_n\in (\H)^{\N}$ is a
Palais-Smale sequence, namely $(I_q(u_n))_n$ is bounded and
$I_q'(u_n)\to 0$ in $H^{-1}$, we obtain the following
inequality
    \begin{equation}\label{eq:p-s}
        I_q(u_n) - \frac{1}{p+1}\langle I_q'(u_n),u_n\rangle
        \le C_1 + C_2 \|u_n\|.
    \end{equation}
Since, by \eqref{eq:J'},
    \begin{equation*}
        \langle I'_q(u_n),u_n\rangle = \langle \partial_u
        J_q(u_n,\Phi_{u_n}),u_n\rangle,
    \end{equation*}
from \eqref{eq:p-s} we get
    \[
    \left(\frac 1 2 -\frac 1 {p+1}\right)\int_\O |\n u_n|^2 dx+
    \eps \frac q 2\int_\O F(u_n)\Phi_{u_n} dx
    -\eps\frac q {p+1} \int_\O f(u_n)u_n
    \Phi_{u_n } dx \le C_1+ C_2\|u_n\|.
    \]
In the classical SPE ($\eps=1$ and $f(t)=t$) we should deduce the
boundedness of the sequence $(u_n)_n$ for $p\ge 3$. In our general
situation we need a different approach.

Let $T>0$ and $\chi :[0,+\infty[ \rightarrow [0,1]$ be a smooth
function such that $\|\chi^{\prime}\|_{L^{\infty}} \le 2$ and
\[ \chi(s)=\left\{
\begin{array}{ll}
1 &\text{if }0 \le s \le 1 \\
0 &\text{if }s \ge 2.
\end{array} \label{chi}
\right.
\]
We define a new functional $I^T_q:H^1_0(\Omega) \rightarrow
\mathbb{R}$ as follows
\begin{equation}
I^T_q(u)=\frac{1}{2}\int_{\Omega} |\nabla u|^2dx+ \eps
\frac{q}{2}\chi \left(\frac{\|u\|_{H^1_0}}{T}\right) \int_{\Omega}
F(u)\Phi_u dx-\frac{1}{p+1}\int_{\Omega} |u|^{p+1}dx
\label{functional2}
\end{equation}
for all $u \in H^1_0(\Omega)$.
We are going to find a critical point $u \in H^1_0(\Omega)$ of this new functional such that $\|u\|_{H^1_0} \le T$ in order to get solutions of our problem.

\section{Proof of Theorem \ref{main}}\label{proof}

We prove that the functional $I^T_q$ satisfies Mountain Pass
geometrical assumptions. More precisely, we have the following
result.

\begin{lemma}\label{lemma1}
Under hypothesis \eqref{ipotesi}, there exists $\bar{q}\in
\R_+\cup\{+\infty\}$ such that for all $0<q < \bar{q}$ functional
$I^T_q$ satisfies:
\begin{enumerate}[label=(\roman*), ref=(\roman*)]
\item \label{31i}$I^T_q(0)=0$;
\item \label{31ii}there exist constants
$\rho,\alpha>0$ such that
\begin{equation*}
I^T_q(u)\ge \alpha \qquad \text{for all } u \in H^1_0(\Omega) \quad
\text{with } \|u\|_{H^1_0}=\rho ;
\end{equation*}
\item \label{31iii} there exists a function $\bar{u} \in H^1_0(\Omega)$
with $\|\bar{u}\|_{H^1_0}> \rho$ such that $I^T_q(\bar{u}) < 0$.
\end{enumerate}
\end{lemma}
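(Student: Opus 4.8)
The plan is to verify the three items directly from the definition \eqref{functional2}, relying only on the a priori bound \eqref{eq:control}, the positivity \eqref{eq:pos}, and the continuity of the Sobolev embeddings $H^1_0(\Omega)\hookrightarrow L^s(\Omega)$ for $1\le s\le 6$ (all available since $\Omega$ is bounded; in particular for $s=6/5$ and for $s=p+1<6$). Throughout write $\|\cdot\|:=\|\cdot\|_{H^1_0}$.

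Item \ref{31i} is immediate: $F(0)=0$ forces $\Phi_0=0$, so every summand of $I^T_q(0)$ vanishes.

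For item \ref{31ii}, use $0\le\chi\le 1$ together with \eqref{eq:pos} and \eqref{eq:control} to estimate
\[
I^T_q(u)\ \ge\ \tfrac12\|u\|^2-\tfrac{q}{2}\int_\Omega F(u)\Phi_u\,dx-\tfrac1{p+1}\|u\|_{p+1}^{p+1}\ \ge\ \tfrac12\|u\|^2-\tfrac{q^2}{2}\bigl(C_1\|u\|^2+C_2\|u\|^{10}\bigr)-C_3\|u\|^{p+1},
\]
where in the last step I also used $\|u\|_{6/5}\le C\|u\|$, $\|u\|_6\le C\|u\|$ and $\|u\|_{p+1}\le C\|u\|$. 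If $\eps=1$ the middle term of \eqref{functional2} is nonnegative and may simply be dropped, giving $I^T_q(u)\ge\tfrac12\|u\|^2-C_3\|u\|^{p+1}$ with no restriction on $q$; this is the case $\bar q=+\infty$. If $\eps=-1$, choose $\bar q>0$ so small that $\bar q^2C_1\le\tfrac12$; then for $0<q<\bar q$,
\[
I^T_q(u)\ \ge\ \|u\|^2\Bigl(\tfrac14-\tfrac{\bar q^2C_2}{2}\|u\|^8-C_3\|u\|^{p-1}\Bigr).
\]
Since $p-1>0$, the bracket tends to $\tfrac14$ as $\|u\|\to 0$, so one can fix $\rho>0$ (independent of $q$) making it $\ge\tfrac18$ on $\{\|u\|=\rho\}$, and set $\alpha:=\rho^2/8>0$.

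For item \ref{31iii}, fix any $u_0\in H^1_0(\Omega)\setminus\{0\}$. Whenever $t>0$ satisfies $t\|u_0\|\ge 2T$ one has $\chi(t\|u_0\|/T)=0$, so the nonlocal term disappears and
\[
I^T_q(tu_0)=\frac{t^2}{2}\|u_0\|^2-\frac{t^{p+1}}{p+1}\|u_0\|_{p+1}^{p+1},
\]
which tends to $-\infty$ as $t\to+\infty$ because $p+1>2$ and $\|u_0\|_{p+1}>0$. Choosing $t$ large enough that $t\|u_0\|>\rho$ and the right-hand side is negative, the function $\bar u:=tu_0$ does the job.

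The only delicate point is item \ref{31ii} when $\eps=-1$: there the nonlocal term has the unfavorable sign, but \eqref{eq:control} shows its contribution to $I^T_q$ is of size $q^2\bigl(\|u\|^2+\|u\|^{10}\bigr)$, so its quadratic part is absorbed by taking $q$ small and its higher-order part — together with the $\|u\|^{p+1}$ term — by taking $\rho$ small. No compactness or truncation subtleties enter at this stage; those are postponed to the Palais--Smale analysis of $I^T_q$.
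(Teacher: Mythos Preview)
Your proof is correct and follows essentially the same route as the paper: the same case split on $\eps$ for \ref{31ii}, relying on \eqref{eq:pos} when $\eps=1$ and on \eqref{eq:control} plus Sobolev embeddings when $\eps=-1$, and the same trick of taking $t$ large enough that the cut-off $\chi$ vanishes for \ref{31iii}. The only cosmetic difference is that you make the choice of $\rho$ and $\alpha$ slightly more explicit and note that $\bar q=+\infty$ in the case $\eps=1$.
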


\begin{proof}
Property \ref{31i} is trivial.\\
To prove \ref{31ii} we distinguish two cases.\\
If $\eps =1$, by \eqref{eq:pos}, for every $q>0$ we deduce that
\[
I^T_q(u)\ge \frac{1}{2} \rho^2 - C \rho^{p+1} \ge \a >0
\]
for suitable $\rho, \a >0$.\\
 If $\eps =-1$, by using \eqref{eq:control} and the immersion of $H^1_0(\Omega)$
into $L^p(\Omega)$ spaces, we get
\begin{align*}
I^T_q(u) & \ge \frac{1}{2} \|u\|^2_{H^1_0}-\frac{q}{2} \int_{\Omega}
F(u)\Phi_u dx-\frac{1}{p+1}\int_{\Omega}
|u|^{p+1}dx  \\
&  \ge \frac{1}{2} \|u\|^2_{H^1_0}-\frac{q^2}{2}(C_1\|u\|_{
6/5}^2+C_2\|u\|^{10}_{6}) -\frac{1}{p+1}
\|u\|^{p+1}_{p+1}  \\
& \ge \frac{1}{2} \|u\|^2_{H^1_0}-\frac{q^2}{2}(C_1
\|u\|^{2}_{H^1_0}+C_2 \|u\|^{10}_{H^1_0}) -\frac{C}{p+1}
\|u\|^{p+1}_{H^1_0}.
\end{align*}
Thus, if $q$ is such that $q^2C_1<1$ and $\rho$ is small enough,
there exists $\alpha
>0$ such that $I^T_q(u) \ge \alpha$ for all $u \in H^1_0(\Omega)$
with $\|u\|_{H^1_0}=\rho$.
\\
To prove \ref{31iii}, let us consider $u \in H^1_0(\Omega)$, $u \neq 0$ and $t >
\frac{2T}{\|u\|_{H^1_0}}$ such that $\chi \left(\frac{ \|tu\|_{H^1_0}}{T}\right)=\chi \left(\frac{t \|u\|_{H^1_0}}{T}\right)=0$. Then, we have
\[
I^T_q(tu) =\frac{t^2}{2} \int_{\Omega} |\nabla u|^2 dx -
\frac{t^{p+1}}{p+1}\int_{\Omega} |u|^{p+1}dx
\]
and so, for
$t$ large enough, $I^T_q(tu)$ is negative. %\end{enumerate}
\end{proof}

Thus we can complete the proof of Theorem \ref{main}.

\begin{proof}[Proof of Theorem \ref{main}]
Lemma \ref{lemma1} allows us to define, for $q<\bar q$,
\begin{equation}
m^T_q=\inf_{\gamma \in \Gamma}\sup_{t \in [0,1]}I^T_q(\gamma(t))>0
\label{infsup2}
\end{equation}
where
\begin{equation*}
\Gamma=\{ \gamma \in \mathcal{C}([0,1],H^1_0(\Omega))\mid
\gamma(0)=0, I^T_q(\gamma(1))<0 \} .
\end{equation*}
Certainly there exists a Palais-Smale sequence at mountain pass
level $m^T_q$, that is a sequence $(u_n)_n$ in $H^1_0(\Omega)$
such that
\begin{equation}
I^T_q(u_n) \rightarrow m^T_q \label{PS1}
\end{equation}
and
\begin{equation}
(I^T_q)^{\prime}(u_n) \rightarrow 0 .\label{PS2}
\end{equation}
As a first step we prove that there exists $\bar T>0$ and $\tilde q >0$ such that for any
$0<q\le\tilde q$ there exists a Palais-Smale sequence $(u_n)_n$ of
$I_q$ at the level $m^{\bar T}_q$ such that, up to a subsequence,
$\|u_n\|_{\H}\le \bar T$ for any $n\in\N$.\\
Let $T>0$, $q>0$ and $(u_n)_n$ in $H^1_0(\Omega)$ be a
Palais-Smale sequence of $I^T_q$ at level $m^{T}_q$.\\
We make some preliminary computations.
The first is an estimate on the mountain pass level $m_q^{T}$.\\
Let $u \in
H^1_0(\Omega), u \neq 0$ and $\bar{t}>0$ be such that the path
$\bar\g(t)=t\bar t u$ belongs to $\G.$ For all $t \in [0,1]$ it is
\[
I^{T}_q(\bar\g(t))  = \frac{t^2}{2}\int_{\Omega} |\n\bar{t}
u|^2
dx
+\eps \frac{q}{2}\chi \left(\frac{t \|\bar{t}u\|_{H^1_0}}{\bar
T}\right) \int_{\Omega} F(t\bar{t}u)\Phi_{t\bar{t}u}
dx-\frac{t^{p+1}}{p+1}\int_{\Omega} |\bar{t}u|^{p+1}dx.
\]
From \eqref{eq:control} we obtain
\begin{align*}
\max_{t\in[0,1]}I^{T}_q(\bar\g(t)) &\le \max_{t\in
[0,1]}\left(C_1 t^2\int_\O|\n u|^2\, dx -
C_2 t^{p+1} \int_\O|u|^{p+1}\, dx\right)\\
&\qquad +q^2\max_{t\in [0,1]}\left[\chi\left(\frac{t\bar t
\|u\|_{H^1_0}}{{T}}\right)\left(C_3 t^2\|\bar t
u\|^2_{H^1_0}+C_4
t^{10}\|\bar t u\|^{10}_{H^1_0}\right)\right]\\
&\le C + q^2 (C_5 {T}^2 + C_6 {T}^{10}).
\end{align*}
Then, from \eqref{infsup2} we get
\begin{equation}
m^{T}_q \le C + q^2 (C_5 {T}^2 + C_6 {T}^{10})
.\label{stimalivello}
\end{equation}
From \eqref{functional2} we deduce that
\begin{equation*}
\langle (I^{\bar T}_q)^{\prime}(u_n), u_n
\rangle=\int_{\Omega}|\nabla u_n|^2 dx+{\mathcal A}_n+{\mathcal
B}_n+{\mathcal C}_n-\int_{\Omega}|u_n|^{p+1}dx
\end{equation*}
where
\begin{equation*}
{\mathcal A}_n=\eps \frac q 2
\chi^{\prime}\left(\frac{\|u_n\|_{H^1_0}}{T}\right)\frac{\|u_n\|_{H^1_0}}{T}\int_{\Omega}F(u_n)\Phi_{u_n}dx
\end{equation*}
\begin{equation*}
{\mathcal B}_n=\eps \frac{q}{2}\chi
\left(\frac{\|u_n\|_{H^1_0}}{{\bar
T}}\right)\int_{\Omega}f(u_n)u_n\Phi_{u_n} dx
\end{equation*}
and
\begin{equation*}
{\mathcal C}_n=\eps \frac{q}{2}\chi
\left(\frac{\|u_n\|_{H^1_0}}{{\bar T}}\right)\int_{\Omega}F(u_n)
\Phi_{u_n}^{\prime}[u_n] dx .
\end{equation*}
Since the functional
    \begin{equation*}
        S(u)=\int_\O|\n \Phi_u|^2 dx - 2 q\int_\O F(u)\Phi_u dx, \; u\in\H
    \end{equation*}
is identically equal to 0, we have that
    \begin{equation*}
        0=\frac{1}{2}\langle S'(u),u\rangle = \int_{\O} (\n \Phi_u|\n \Phi'_u[u])dx - q \int_\O f(u)\Phi_u u dx- q \int_\O F(u) \Phi'_u[u]dx.
    \end{equation*}
On the other hand, multiplying the second equation of \eqref{P} by
$\Phi'_u[u]$ and integrating, we have that
    \begin{equation*}
        \int_\O (\n \Phi_u|\n \Phi'_u[u])dx=2q \int_\O F(u) \Phi'_u[u]dx
    \end{equation*}
and then
$$\int_\O F(u) \Phi'_u[u]dx=\int_\O f(u)\Phi_u u dx.$$
We deduce that ${\mathcal B}_n={\mathcal C}_n$ for any $n\in \mathbb{N}.$
By using \eqref{PS1} and \eqref{PS2} we get
\begin{align}
m^{T}_q+o_n(1)\|u_n\|_{H^1_0}+o_n(1)&=I^{T}_q(u_n)-\frac{1}{p+1}\langle (I^{T}_q)^{\prime}(u_n), u_n
\rangle\nonumber\\
&=\frac{p-1}{2(p+1)}\|u_n\|^2_{H^1_0}+{\mathcal D}_n
-\frac{1}{p+1}{\mathcal A}_n-\frac{2}{p+1}{\mathcal B}_n,
\label{eq:ps}
\end{align}
where
    $${\mathcal D}_n=\eps \frac{q}{2}\chi \left(\frac{ \|u_n\|_{H^1_0}}{{T}}\right)
        \int_{\Omega} F(u_n)\Phi_{u_n} dx.$$
For \eqref{ipotesi} and \eqref{eq:control} we have also the
following estimate
\begin{equation*}
\max \left( |{\mathcal A}_n|,|{\mathcal B}_n|,|{\mathcal
D}_n|\right)\le q^2(C_1 {T}^2 + C_2 {T}^{10}).
\end{equation*}
We show that, if ${T}$ is sufficiently large, then
$\limsup_{n}\|u_n\|_{H^1_0}\le {T}$. \\ By contradiction, we
will assume that there exists a subsequence (relabeled $(u_n)_n$)
such that for all $n\in\mathbb{N}$ we have $\|u_n\|_{H^1_0}> {T}$. By our contradiction hypothesis, \eqref{stimalivello} and
\eqref{eq:ps}, we obtain, for $n$ large enough,
\begin{equation*}
{T}^2 -\s {T} \le \|u_n\|^2_{H^1_0} - \s
\|u_n\|_{H^1_0}\le C + q^2 (C_1 {T}^2+C_2 {T}^{10}),
\end{equation*}
with $\s >0$ small. If ${T}^2 - \s {T} >C$, we can find
$\tilde q$ such that for any $q\le
\tilde q$ the previous inequality turns out to be a contradiction.
The contradiction arises from the assumption that
$\limsup_{n}\|u_n\|_{H^1_0}\ge {T}$. So we have that
 the sequence $(u_n)_n$ possesses a subsequence which is bounded in the $\H$ norm by ${T}$ and such that,
 for every $n\in\mathbb{N}$,
 $I^{T}_q(u_n)$ coincides with $I_q(u_n)$.\\
The last step is to prove that there exists $\bar q >0$ such that
for any $0<q\le\bar q$ there exists a Palais-Smale sequence
$(u_n)_n$ of $I_q$ which is, up to a subsequence, weakly convergent
to  a nontrivial critical point of
$I_q$.\\
Let $\bar T$ and $\tilde q$ be given by the first step, and consider
any $0< q \le\tilde q$. We know that there exists a Palais-Smale
sequence of the functional $I_q$ at the level $m_q:= m_q^{\bar T}$,
such that
    \begin{equation}\label{eq:bou}
         \|u_n\|_{\H}\le \bar T, \hbox{ for any $n\in\N$.}
    \end{equation} Up to subsequences, there exist $u_0\in\H$
and $\Phi_0\in\H$ such that
\begin{align}
u_n &\rightharpoonup u_0\text{ in } H^1_0(\Omega)\label{eq:uzero}\\
\Phi_{u_n} &\rightharpoonup {\Phi_0} \text{ in }
H^1_0(\Omega)\label{fibarra} .
\end{align}
By \eqref{ipotesi} and \eqref{eq:uzero} we also have
\begin{align}
F(u_n) &\rightharpoonup F(u_0) \quad \text{in } L^{\frac{6}{5}}(\O),\label{convl}\\
f(u_n)u_n&\rightharpoonup f(u_0)u_0 \quad \text{in }
L^{\frac{6}{5}}(\O).\label{convv2}
\end{align}
Now we show that $\Phi_0=\Phi_{u_0}$ and that $(u_0,\Phi_0)$ is a weak nontrivial solution of \eqref{P}.\\
Let us consider a test function $\psi \in {C^{\infty}_{0}}(\Omega)$.
From the second equation of our problem we obtain
\begin{equation*}
\int_{\Omega}(\nabla \Phi_{u_n}| \nabla \psi ) dx= 2q \int_{\Omega}
F(u_n) \psi dx.
\end{equation*}
Passing to the limit and using \eqref{fibarra} and \eqref{convl}, we
have that
\begin{equation*}
\int_{\Omega}(\nabla {\Phi_0}| \nabla \psi ) dx= 2q \int_{\Omega}
F(u_0) \psi dx.
\end{equation*}
So ${\Phi_0}$ is a weak solution of $- \Delta \Phi =2q F(u_0)$,
and then, by uniqueness, it is ${\Phi_0}=\Phi_{u_0}$.\\
Since $(u_n)_n$ is a Palais-Smale sequence, for any $\psi \in
{C^{\infty}_{0}}(\Omega)$ we obtain that
    \begin{equation*}
        \int_{\O} (\n u_n|\n \psi)dx+\eps q \int_{\O} \Phi_{u_n}
        f(u_n)\psi dx=\int_{\O}
        |u_n|^{p-1}u_n\psi dx+ o_n(1).
    \end{equation*}
Passing to the limit, by \eqref{eq:uzero} and \eqref{convv2} we have
    \begin{equation*}%\label{eq:weaksolution}
        \int_{\O} (\n u_0|\n \psi)dx+\eps q \int_{\O} \Phi_{u_0}
        f(u_0)\psi dx=\int_{\O}
        |u_0|^{p-1}u_0\psi dx
    \end{equation*}
that is $(u_0,\Phi_{u_0})$ is a weak solution of \eqref{P}.
\\
It remains to prove that $u_0\neq 0.$\\
Assume by contradiction that $u_0=0.$ By compactness we obtain that
$u_n\to 0$ in $L^{p+1}(\O).$ On the other hand, since $\langle
I_q'(u_n),u_n\rangle \to 0,$ we deduce that, up to subsequences,

    \begin{equation}\label{psseq}
        \lim_n \int_\O |\n u_n|^2\, dx=-\lim_n \eps q\int_\O f(u_n)u_n\Phi_{u_n}\, dx =:l_q\ge 0.
    \end{equation}
Of course $l_q>0$. Otherwise from \eqref{psseq} we would deduce that
$u_n\to 0$ in $\H$ and then $0<m_q=\lim_n I_q(u_n)=0$.\\
By
\eqref{eq:bou} we also have that $l_q\le\bar T.$ By using Holder
inequality, Sobolev inequalities, \eqref{ipotesi}, \eqref{Holder2} and \eqref{crescita}
we have that
    \begin{align*}
        -\eps q \int_\O f(u_n)u_n\Phi_{u_n}dx&\le Cq^2
\left(\int_{\Omega} |f(u_n)u_n|^{6/5}dx\right)^{5/6}
\left(\int_{\Omega}
|F(u_n)|^{6/5}dx\right)^{5/6}\\
%&\le q( C_1\|u_n\|_{ 6/5}^2+C_2\|u_n\|^{10}_{6})\\
&\le q^2\left( C_1\|u_n\|_{\H}^2+C_2\|u_n\|^{10}_{\H}\right).
    \end{align*}
Passing to the limit, by \eqref{psseq} we deduce that
    \begin{equation*}
        l_q\le q^2 \left( C_1l_q^2+C_2l_q^{10}\right)
    \end{equation*}
that is
    \begin{equation*}
       1\le q^2\left( C_1l_q+C_2l_q^{9}\right)\le q^2\left( C_1\bar T +C_2\bar T^{9}\right).
    \end{equation*}
We conclude observing that the previous inequality does not hold if
we take $q\le\bar q< \min\left\{\tilde q,(\frac 1 {C_1\bar T +C_2\bar T^{9}})^{\frac 1
2}\right\}$.

\end{proof}

\section{Proof of Theorem \ref{th:r}}\label{sec:positive}

This section deals with the study of the system \eqref{Pq}. We
divide the proof of Theorem \ref{th:r} in two parts, concerning
respectively the existence and the non existence of a solution
according to the value of $r$.

\subsection{The existence result}

Here, we suppose that $1<r<5$. In analogy to problem \eqref{P}, we
use a variational approach finding the solutions as critical points
of the $C^1$ functional
\[
I_{q,r} (u)=\frac{1}{2}\int_{\Omega} |\nabla u|^2dx
        -\frac{q}{2r}\int_{\Omega} |u|^r \Phi_u dx
        +\frac{1}{p+1}\int_{\Omega} |u|^{p+1}dx,
\]
where, for any $u\in\H$, $\Phi_u\in\H$ is the unique positive solution of
\begin{equation}
\left\{
\begin{array}{ll}
    -r\Delta \Phi=2q |u|^r
    &
    \hbox{in } \Omega,\\
    \Phi=0
    &
    \hbox{on } \partial \Omega.
\end{array}
\right.
\label{eq:seceq}
\end{equation}
We have the following estimates.
\begin{lemma}
For every $u \in H^1_0(\Omega)$
\begin{equation}\label{eq:estimate}
\| \Phi_u \|_{\H} \le  C \|u \|^r_{\frac{6}{5}r}
\end{equation}
and for any $k>0$ it is
    \begin{equation}\label{eq:upperest}
        \frac{kq}{r}\int_{\Omega} |u|^r \Phi_u dx\ge \frac{2q}{r}\int_\O
        |u|^{r+1}dx-\frac 1 {2k} \int_\O |\n u|^2 dx.
    \end{equation}
\end{lemma}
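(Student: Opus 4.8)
The plan is to derive both estimates by testing the second equation in \eqref{eq:seceq} with suitable functions and then applying H\"older's and Young's inequalities. Preliminarily, note that since $u\in\H$ and $r<5$ we have $u\in L^{\frac{6}{5}r}(\O)$ with $\frac{6}{5}r\le 6$, so $|u|^r\in L^{6/5}(\O)\hookrightarrow H^{-1}$ and $\Phi_u$ is well defined; moreover $\Phi_u\ge 0$ by the maximum principle. Testing \eqref{eq:seceq} with $\Phi_u$ itself gives the identity
\[
r\int_\O|\n\Phi_u|^2\,dx=2q\int_\O|u|^r\Phi_u\,dx ,
\]
and combining it with H\"older's inequality (exponents $6/5$ and $6$) and the Sobolev embedding $\H\hookrightarrow L^6(\O)$ yields
\[
r\,\|\Phi_u\|_{\H}^2\le 2q\,\||u|^r\|_{6/5}\,\|\Phi_u\|_6\le C\,\|u\|_{\frac{6}{5}r}^r\,\|\Phi_u\|_{\H};
\]
dividing by $\|\Phi_u\|_{\H}$ (the case $\Phi_u=0$ being trivial) gives \eqref{eq:estimate}.

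For \eqref{eq:upperest} the key observation is that $|u|\in\H$ whenever $u\in\H$, with $|\n|u||=|\n u|$ a.e.; hence $|u|$ is an admissible test function in \eqref{eq:seceq}, and since $r+1<6$ the right-hand side is finite, so
\[
r\int_\O(\n\Phi_u|\n|u|)\,dx=2q\int_\O|u|^{r+1}\,dx .
\]
Next I would bound the left-hand side by Young's inequality with the free parameter $k>0$,
\[
r\int_\O(\n\Phi_u|\n|u|)\,dx\le\frac{rk}{2}\int_\O|\n\Phi_u|^2\,dx+\frac{r}{2k}\int_\O|\n u|^2\,dx ,
\]
and then replace $\int_\O|\n\Phi_u|^2\,dx$ by $\frac{2q}{r}\int_\O|u|^r\Phi_u\,dx$ using the identity from the first part. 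This produces
\[
2q\int_\O|u|^{r+1}\,dx\le qk\int_\O|u|^r\Phi_u\,dx+\frac{r}{2k}\int_\O|\n u|^2\,dx ,
\]
and dividing by $r$ and rearranging gives exactly \eqref{eq:upperest}.

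I do not foresee a real obstacle: the only points requiring (routine) care are checking that $|u|$ is a legitimate test function in \eqref{eq:seceq} and that the various integrals are finite, both of which follow from the embedding $\H\hookrightarrow L^6(\O)$ and $1<r<5$. The positivity $\Phi_u\ge 0$ is what makes the term $\int_\O|u|^r\Phi_u\,dx$ nonnegative and hence \eqref{eq:upperest} useful as a genuine lower bound on the gradient term.
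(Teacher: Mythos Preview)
Your proof is correct and follows essentially the same route as the paper: for \eqref{eq:estimate} you test \eqref{eq:seceq} with $\Phi_u$, apply H\"older and Sobolev, exactly as done (by reference to Section~\ref{VT}) in the paper; for \eqref{eq:upperest} you test with $|u|$, use Young's inequality with parameter $k$, and substitute the energy identity $\int_\O|\n\Phi_u|^2=\frac{2q}{r}\int_\O|u|^r\Phi_u$, which is precisely the argument the paper gives (following \cite{Ru}).
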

\begin{proof}
    The first part of the lemma is a consequence of the fact that,
    since $1< r< 5$, then for any $u\in\H$ the function
    $|u|^r\in L^{\frac 6 5}(\O)$ and we can argue as in Section
    \ref{VT} to define the map $\Phi$ and to deduce
    \eqref{eq:estimate}.\\
    In order to prove \eqref{eq:upperest}, we proceed as in
    \cite{Ru}: multiplying \eqref{eq:seceq} by $|u|$ and integrating
    we get
        \begin{align*}
                    \frac{2q}{r}\int_\O
        |u|^{r+1}dx&= \int_\O (\n \Phi_u|\n |u|)\,dx=\int_\O \left( \sqrt k\n\Phi_u\;\vline\;\frac1{\sqrt k}\n
        |u|\right)\,dx\\
        &\le \frac k 2 \int_\O|\n\Phi_u|^2 dx+\frac 1 {2k} \int_\O |\n |u||^2 dx.
        \end{align*}
    Inequality \eqref{eq:upperest} follows since it is
        \begin{equation*}
            \int_\O|\n\Phi_u|^2\,dx=\frac{2q}r\int_\O |u|^r\Phi_u\,
            dx.
        \end{equation*}
\end{proof}

In the following lemma we establish the compactness of the
Palais-Smale sequences of the functional $I_{q,r}$.

\begin{lemma}\label{le:ps}
If $1<r<5$ the functional $I_{q,r}$ satisfies the Palais-Smale condition.
\end{lemma}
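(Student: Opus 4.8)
The plan is to take a Palais--Smale sequence $(u_n)_n$ for $I_{q,r}$ at some level $c\in\R$, show first that it is bounded in $\H$, then extract a weakly convergent subsequence $u_n\weakto u$ and upgrade the convergence to strong in $\H$. For the boundedness, I would combine the standard manipulation $I_{q,r}(u_n)-\frac{1}{p+1}\langle I_{q,r}'(u_n),u_n\rangle$ with the sign of the nonlocal term. Recall that from \eqref{eq:seceq} we have $\int_\O|u_n|^r\Phi_{u_n}\,dx\ge 0$, and a direct computation gives
\[
I_{q,r}(u_n)-\frac{1}{p+1}\langle I_{q,r}'(u_n),u_n\rangle
=\left(\frac12-\frac1{p+1}\right)\|u_n\|_{\H}^2
+\left(\frac{1}{p+1}-\frac1{2r}\right)\frac{q}{r}\int_\O|u_n|^r\Phi_{u_n}\,dx,
\]
using $\langle I_{q,r}'(u),u\rangle=\|u\|_\H^2-\frac{q}{r}\int_\O|u|^r\Phi_u\,dx+\|u\|_{p+1}^{p+1}$ together with the identity $\int_\O(\n\Phi_u|\n\Phi_u'[u])\,dx=2q\int_\O|u|^{r-2}u\,\Phi_u'[u]u\,dx$ which, as in the previous section, forces the $\Phi_u'[u]$ contribution to cancel cleanly (equivalently, differentiate $S(u)=\int_\O|\n\Phi_u|^2-\frac{2q}{r}\int_\O|u|^r\Phi_u\equiv 0$). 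Since $1<r<5<p+1$ is false in general, I must be careful: the coefficient $\frac{1}{p+1}-\frac{1}{2r}$ is nonnegative precisely when $2r\ge p+1$, i.e. $r\ge\frac{p+1}{2}$; in that regime the left-hand side controls $\|u_n\|_\H^2$ directly and boundedness follows from $I_{q,r}(u_n)\to c$ and $\|I_{q,r}'(u_n)\|\to0$. In the complementary range $1<r<\frac{p+1}2$ I would instead use \eqref{eq:upperest}: choosing $k$ large enough there, the term $-\frac{q}{2r}\int_\O|u|^r\Phi_u\,dx$ appearing in $I_{q,r}$ is bounded below by $-\frac{1}{4k}\|u\|_\H^2$ up to a harmless positive multiple of $\int_\O|u|^{r+1}$, and since $r+1<p+1$ the latter is absorbed by the coercive $\frac{1}{p+1}\|u\|_{p+1}^{p+1}$ term (via Young's inequality). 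Either way one obtains $\sup_n\|u_n\|_\H<\infty$.

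Once $(u_n)_n$ is bounded, pass to a subsequence with $u_n\weakto u$ in $\H$, $u_n\to u$ in $L^s(\O)$ for all $1\le s<6$, and $u_n\to u$ a.e. Since $1<r<5$ we have $|u_n|^r$ bounded in $L^{6/5}(\O)$ and $|u_n|^{r-2}u_n$ bounded in $L^{6/(r-1)}(\O)$ with $|u_n|^r\to|u|^r$ strongly in $L^{6/5}$ (again because $6r/5<6$), so by the continuity and compactness of the solution map established in Section~\ref{VT} and estimate \eqref{eq:estimate} we get $\Phi_{u_n}\to\Phi_u$ strongly in $\H$, hence in $L^6(\O)$. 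Then $\int_\O|u_n|^{r-2}u_n\Phi_{u_n}(u_n-u)\,dx\to0$: the product $|u_n|^{r-2}u_n\,\Phi_{u_n}$ is bounded in some $L^\sigma$ with $\sigma>6/5$ (one checks $\frac{r-1}{6}+\frac16<\frac56$ using $r<5$), so it is tested against $u_n-u\to0$ in the complementary Lebesgue space. Finally, from $\langle I_{q,r}'(u_n),u_n-u\rangle\to0$ and $\langle I_{q,r}'(u_n),u\rangle\to\langle I_{q,r}'(u),u\rangle$ (weak convergence in the linear term, strong convergence everywhere else) we deduce $\int_\O|\n u_n|^2\,dx\to\int_\O|\n u|^2\,dx$, hence $u_n\to u$ in $\H$ by uniform convexity.

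The step I expect to be the main obstacle is the nonlocal term: verifying that $|u_n|^{r-2}u_n\,\Phi_{u_n}$ lies in a fixed Lebesgue space dual to one containing $u_n-u\to0$ strongly, uniformly in $n$, across the whole range $1<r<5$. This requires exploiting that both $|u_n|^{r-2}u_n$ (through $\|u_n\|_{6(r-1)/6}$-type norms) and $\Phi_{u_n}$ (through $\|\Phi_{u_n}\|_6\le C\|\Phi_{u_n}\|_\H\le C\|u_n\|_{6r/5}^r$) are controlled by the $\H$-bound, and then a careful Hölder bookkeeping with exponents depending on $r$; the endpoint $r\to5$ is where the exponents become tightest, which is exactly why the hypothesis is $r<5$ rather than $r\le5$. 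Everything else — the coercivity split and the convexity argument — is routine given the lemmas already proved.
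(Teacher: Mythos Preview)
Your treatment of the case $r\ge\frac{p+1}{2}$ matches the paper's (up to an inessential factor of $1/r$ in your formula for $\langle I_{q,r}'(u),u\rangle$; the correct coefficient on the nonlocal term there is $q$, not $q/r$, as one reads off from the first equation of \eqref{Pq}). The sign analysis is unaffected, so boundedness follows just as you say.

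There is, however, a genuine gap in the range $1<r<\frac{p+1}{2}$. Inequality \eqref{eq:upperest} is a \emph{lower} bound on $\int_\O|u|^r\Phi_u\,dx$, equivalently an \emph{upper} bound on the negative term $-\frac{q}{2r}\int_\O|u|^r\Phi_u\,dx$; hence it goes in the wrong direction for coercivity and cannot give the lower bound you claim. What is needed is an \emph{upper} bound on $\int_\O|u|^r\Phi_u\,dx$, and this comes from \eqref{eq:estimate} together with H\"older and the embedding $L^{p+1}(\O)\hookrightarrow L^{6r/5}(\O)$ (valid since $r<\frac{p+1}{2}$ forces $\frac{6r}{5}<p+1$):
\[
\int_\O|u|^r\Phi_u\,dx\le C\|u\|_{6r/5}^{2r}\le C'\|u\|_{p+1}^{2r}.
\]
Then
\[
I_{q,r}(u)\ge \tfrac12\|u\|_{\H}^2 - C'\|u\|_{p+1}^{2r} + \tfrac{1}{p+1}\|u\|_{p+1}^{p+1},
\]
and since $2r<p+1$ the last two terms together are bounded below, giving coercivity. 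This is exactly the paper's argument; your instinct to absorb a lower-order remainder into $\|u\|_{p+1}^{p+1}$ is right, but the remainder you can legitimately produce is $\|u\|_{p+1}^{2r}$, not $\|u\|_{r+1}^{r+1}$ via \eqref{eq:upperest}.

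Your strong-convergence step is correct and in fact more explicit than the paper, which records only the boundedness and leaves the passage from $u_n\weakto u$ to $u_n\to u$ implicit. The H\"older bookkeeping you sketch (requiring $\frac{r-1}{6}+\frac{1}{6}<\frac{5}{6}$, i.e.\ $r<5$) is exactly what is needed and explains where the hypothesis $r<5$ enters.
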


\begin{proof}
Let $(u_n)_n$ be a a Palais-Smale sequence for the functional $I_{q,r}$,
namely $(I_{q,r}(u_n))_n$ is bounded and $I'_{q,r}(u_n)$ converges to zero in
$H^{-1}$.\\
We distinguish two cases.\\
If $r\ge (p+1)/2$, then
\begin{align*}
I_{q,r} (u_n)- \frac{I'_{q,r} (u_n)[u_n]}{p+1}  &=\frac{p-1}{2(p+1)}  \int_\O
|\nabla u_n|^2dx + q  \frac{2r-p-1}{2r(p+1)} \int_\O |u_n|^r
\Phi_{u_n} dx
\\
&\le C + o_n(1)\|u_n\|_{\H}
\end{align*}
and then $(u_n)_n$ is bounded.
\\
If $r < (p+1)/2$, then the boundedness of $(u_n)_n$ comes from the
boundedness of $(I_q(u_n))_n$ since the functional is coercive.
Indeed, if  we suppose that
$(u_n)_n$ diverges in the $H^1_0 (\O)-$norm, then, by \eqref{eq:estimate}
and the Lebesgue embedding $L^{p+1}(\Omega)\hookrightarrow L^{\frac{6}{5}r}(\Omega)$,
\[
I_{q,r}(u_n)\ge C_1 \| u_n \|_{\H}^2 - C_2 \| u_n
\|_{p+1}^{2r} + C_3 \| u_n \|_{p+1}^{p+1} \rightarrow +\infty.
\]
%
%
%
%
%\[
% -\frac{q}{2r}\int_{\Omega} |u_n|^r \Phi_{u_n} dx
%        +\frac{1}{p+1}\int_{\Omega} |u_n|^{p+1}dx
%\]
% from
%    $$\frac{1}{2}\int_{\Omega} |\nabla u|^2dx
%        -\frac{q}{2r}\int_{\Omega} |u|^r \Phi_u dx
%        +\frac{1}{p+1}\int_{\Omega} |u|^{p+1}dx\le C$$
%and \eqref{eq:estimate} we would deduce that $(u_n)_n$ diverges in
%$L^{\frac{6}{5}r} (\O)-$norm. By  Theorem,
%$(u_n)_n$ diverges also in the $L^{p+1} (\O)-$norm and then we
%can conclude that
%\[
%C\ge I_{q,r}(u_n)\ge C_1 \| u_n \|_{p+1}^2 - C_2 \| u_n
%\|_{p+1}^{2r} + C_3 \| u_n \|_{p+1}^{p+1} \rightarrow +\infty.
%\]
%So, in any case $(u_n)_n$ is bounded and then, by standard
%arguments, we can obtain that, up to a subsequence, $u_n\rightarrow
%u$ in $\H$.
\end{proof}

Thus we can complete the proof of the Theorem \ref{th:r}.
\begin{proof}[Proof of Theorem \ref{th:r}]
We first suppose that $r$ is \emph{subcritical} and we deal with each case
separately.
\begin{center}
    \emph{case 1: $\displaystyle\frac{p+1}{2} < r< 5$}
\end{center}
We show that for any $q>0$ and for any finite dimensional subspace
of $\H$, the functional $I_{q,r}$ satisfies the assumptions of
\cite[Theorem 2.23]{AR}.\\
By \eqref{eq:estimate} and the Sobolev embedding $\H\hookrightarrow
L^{\frac 6 5 r}(\O)$ it is
\begin{equation*}
I_{q,r}(u)\ge \frac{1}{2} \int_\O |\nabla u|^2dx - C
\|u\|^{2r}_{\frac{6}{5}r}
\ge \frac{1}{2} \|u\|_{\H}^2 -
\bar C \|u\|_{\H}^{2r},  %\label{eq:GMP1}
\end{equation*}
then assumption $(I_1)$ holds since $I_{q,r}(u) \ge\a>0$ if $\|u\|_{\H}$
is sufficiently small. By Lemma \ref{le:ps}, also assumption $(I_3)$
holds. $(I_4)$ can be checked by an easy computation. In order to
prove $(I_7),$ we show that for any finite dimensional subspace $E$
of $\H$ there exists a ball $B_{\bar \rho}$ such that $I_{q,r}|_{E\cap
\partial B_{\bar\rho}}<0$.\\
Let $E$ be a finite dimensional subspace of $\H.$ It is easy to see
that for $\rho>0$ and $u\in\H$
\[
\Phi_{\rho u}=\rho ^r \Phi_u.
\]
Since in $E$ all the norms are equivalent, if $u\in E\cap
\partial B_1,$ by \eqref{eq:upperest} we have
\begin{align}
    I_{q,r}(\rho u)&=\frac{\rho^2}{2} \int_\O |\nabla u|^2dx - \frac{q}{2r}
    \rho^{2r} \int_\O |u|^r \Phi_{u} dx +
    \frac{\rho^{p+1}}{p+1}\int_{\Omega} |u|^{p+1}dx\nonumber\\
    &\le\frac{\rho^2}{2}-  \frac{q}{kr}\rho^{2r}\int_\O
        |u|^{r+1}\,dx+\frac 1 {4k^2}\rho^{2r} + \frac{\rho^{p+1}}{p+1}\int_{\Omega}
        |u|^{p+1}dx\nonumber\\%\label{eq:preneg}\\
        &\le \frac{\rho^2}{2}-  (c_1(k) -c_2(k))\rho^{2r} +
        c_3\rho^{p+1}.\label{eq:negative}
\end{align}
Since $c_1(k)= O(1/k)$ and $c_2(k)= O(1/{k^2})$ for $k\to + \infty,$
we have that for a $k$ sufficiently large $c_1(k)- c_2(k)>0.$ So we
can take $\bar\rho>0$ as large as needed to have that
\eqref{eq:negative} is negative.
\begin{center}
    \emph{case 2: $\displaystyle\frac{p+1}{2}=  r$}
\end{center}
The proof is the same as in the previous case, except for $(I_7).$
In particular we can only prove that for any $E\subset\H$ finite
dimensional subspace there exists $\bar q>0$ such that for every $
q>\bar q$ and  $\bar\rho$ large enough, $I_{q,r}|_{E\cap
\partial B_{\bar\rho}}<0$. Indeed, as in
\eqref{eq:negative}, we have that for any $u\in E\cap
\partial B_1,$ it is
    \begin{align*}
        I_{q,r}(\rho u)&\le\frac{\rho^2}{2}-  \frac{q}{rk}\rho^{2r}
        \|u\|_{r+1}^{r+1}+\frac {\rho^{2r}} {4k^2} + \frac{\rho^{2r}}{2r}
        \|u\|_{2r}^{2r}\\
        &\le \frac{\rho^2}{2}-  (q c_1(k) -c_2(k)- c_3)\rho^{2r}
    \end{align*}
so that if $q$ and $\rho$ are sufficiently large, $I_{q,r}(\rho u)<0$.
\begin{center}
    \emph{case 3: $\displaystyle 1 < r < \frac{p+1}{2}$}
\end{center}
In this case it is easy to check that for any $E\subset\H$ finite
dimensional subspace there exists $q>0$ such that $I_{q,r}|_{E\cap
\partial B_1}<0$. In fact, for any $u\in E\cap
\partial B_1,$ we have that
    \begin{align*}
        I_{q,r}(\rho u)&\le\frac{\rho^2}{2}-  \frac{q}{kr}\rho^{2r}\int_\O
        |u|^{r+1}\,dx+\frac 1 {4k^2}\rho^{2r} + \frac{\rho^{p+1}}{p+1}\int_{\Omega}
        |u|^{p+1}dx\\
        &\le \frac{\rho^2}{2}-  (q c_1(k) -c_2(k)- c_3)\rho^{p+1}.
    \end{align*}
To complete the proof, by \cite[Corollary 2.24]{AR} we have just to
show that for any $q$ the functional $I_{q,r}$ is bounded from below.
Indeed, by \eqref{eq:estimate},
\begin{align*}
I_{q,r}(u)\ge &
\frac{1}{2} \int_\O |\nabla u|^2dx
 - C \| u \|_{\frac{6}{5}r}^{2r}
 + \frac{1}{p+1} \| u \|_{p+1}^{p+1}
\\
\ge&
\frac{1}{2} \int_\O |\nabla u|^2dx - C \| u \|_{p+1}^{2r} + \frac{1}{p+1} \| u \|_{p+1}^{p+1}\\
=&\frac{1}{2} \int_\O |\nabla u|^2dx + \| u \|_{p+1}^{2r}
\left(\frac{1}{p+1} \| u \|_{p+1}^{p+1-2r} - C \right)
\end{align*}
where the last quantity cannot diverge negatively.
\\
%To conclude, suppose that $r=5.$ Since $5>p,$ the functional
%satisfies the geometrical hypotheses of the Mountain Pass Theorem.
%If we consider a Palais-Smale sequence at the mountain pass level,
%we prove that it is bounded as in Lemma \ref{le:ps}. Up to
%subsequences, this Palais-Smale sequence weakly converges to a critical point
%of $I_{q,r}$. Finally, arguing as in the proof of Theorem
%\ref{main}, we show that this critical point is not the trivial
%solution for $q$ sufficiently small.
\end{proof}

It is quite natural to wonder if some nonexistence result can be
proved in the case $1 < r \le \frac{p+1}{2}$ and $q$ small.
Actually, it can easily be observed that the existence of at least a
solution is guaranteed also for small $q$ when a ball with a
sufficiently large radius $R$ is contained in $\O.$ Indeed, consider
$u\in C_0^\infty(\O)$ such that $\|u\|_\infty \le \sigma$ where
$\sigma>0$ and $\frac q {kr}|s|^{r+1}- \frac 1 {p+1}|s|^{p+1}>0$ for
any $s\in ]0,\sigma[.$ We set $u_t=u(\frac\cdot t)$, and we suppose
that $\operatorname{Supp}(u_t)=t \operatorname{Supp} (u)\subset \O$.
By a straight computation, using \eqref{eq:upperest}, we have that
    \[
        I_{q,r}(u_t)\le
        t\left(\frac{1}{2}+\frac{1}{4k^2}\right) \int_\O |\nabla u|^2dx
        - t^3 \left(\frac{q}{kr}\int_\O
        |u|^{r+1}\,dx-
    \frac{1}{p+1}\int_{\Omega} |u|^{p+1}dx\right)
    \]
where this last sum is negative for $t$ sufficiently large. As a
consequence, we should have a mountain pass solution for $2r=p+1$ and
also a minimum solution for $2r<p+1.$

\subsection{The nonexistence result}
Here we assume that $1<p<5\le r.$ Following \cite{St}, we adapt the
Poho\u{z}aev arguments in \cite{P} to our situation (for a similar
result see also \cite{DM}).
\\
Actually the proof is the same as in \cite{AD}, but we report it
here for completeness.

Let $\O \subset \RT$ be a star shaped domain and $u,\Phi\in C^2(\O)\cap C^1(\bar\O)$ be a nontrivial
solution of (\ref{Pq}). If we multiply the first equation of (\ref{Pq}) by $x\cdot\n u$ and the second one by $x\cdot\n\Phi$ we have that
\begin{align*}
0=&(\Delta u + q \Phi |u|^{r-2} u - |u|^{p-1} u )(x\cdot\n u)\\
 =&\operatorname{div}\left[(\n u)(x\cdot\n u)\right] - |\n u|^2
   - x\cdot \n \left(\frac{|\n u|^2}{2}\right)
   + \frac{q}{r} x\cdot\n\left(\Phi |u|^r\right) - \frac{q}{r} (x\cdot\n\Phi)|u|^r
   - \frac{1}{p+1} x \cdot \n (|u|^{p+1})\\
 =&\operatorname{div}\left[(\n u)(x\cdot\n u) - x \frac{|\n u|^2}{2}
   +\frac{q}{r} x \Phi |u|^r - \frac{1}{p+1} x |u|^{p+1}\right] \\
   &\qquad +\frac{1}{2} |\n u|^2
   - \frac{3}{r} q \Phi |u|^r - \frac{q}{r} (x\cdot\n\Phi) |u|^r
   + \frac{3}{p+1} |u|^{p+1}
\end{align*}
and
\begin{align*}
0=&(r\Delta \Phi + 2q |u|^r)(x\cdot\n \Phi)\\
 =&r\operatorname{div}\left[(\n \Phi)(x\cdot\n \Phi)\right] - r|\n \Phi|^2
   - \frac{r}{2} x\cdot \n \left(|\n \Phi|^2\right) + 2q (x\cdot\n\Phi) |u|^r\\
 =&r\operatorname{div}\left[(\n \Phi)(x\cdot\n \Phi) - x \frac{|\n \Phi|^2}{2}
   \right] + \frac{r}{2} |\n \Phi|^2 + 2q (x\cdot\n\Phi) |u|^r.
\end{align*}
Let ${\bf n}$ be the unit exterior normal to $\partial\O$.
Integrating on $\O$, since by boundary conditions $\nabla u = \frac{\partial u}{\partial {\bf n}} {\bf n}$ and $\nabla \Phi = \frac{\partial \Phi}{\partial {\bf n}} {\bf n}$ on $\partial \Omega$, we obtain
\begin{equation}\label{eq:Poho1}
-\frac{1}{2} \|\n u \|_2^2 - \frac{1}{2} \int_{\partial\O} \left| \frac{\partial u}{\partial {\bf n}}\right|^2 x\cdot {\bf n} = -\frac{3}{r} q \int_\O \Phi |u|^r -\frac{q}{r} \int_\O (x\cdot\n\Phi) |u|^r + \frac{3}{p+1} \|u\|_{p+1}^{p+1}
\end{equation}
and
\begin{equation}\label{eq:Poho2}
-\frac{r}{2} \|\n\Phi\|_2^2 -\frac{r}{2} \int_{\partial\O} \left| \frac{\partial \Phi}{\partial {\bf n}}\right|^2 x\cdot {\bf n}
= 2q \int_\O (x \cdot \n \Phi) |u|^r.
\end{equation}
Substituting \eqref{eq:Poho2} into \eqref{eq:Poho1} we have
\begin{equation}\label{eq:Pohoc}
-\frac{1}{2} \|\n u \|_2^2 - \frac{1}{2} \int_{\partial\O} \left| \frac{\partial u}{\partial {\bf n}}\right|^2 x\cdot {\bf n} = -\frac{3}{r} q \int_\O \Phi |u|^r + \frac{1}{4}  \|\n\Phi\|_2^2 + \frac{1}{4}  \int_{\partial\O} \left| \frac{\partial \Phi}{\partial {\bf n}}\right|^2 x\cdot {\bf n} + \frac{3}{p+1} \|u\|_{p+1}^{p+1}.
\end{equation}
Moreover, multiplying the first equation of (\ref{Pq}) by $u$ and the second one by $\Phi$ we get
\begin{equation}\label{eq:Ne1}
\|\n u \|_2^2=  q \int_\O \Phi |u|^r - \|u\|_{p+1}^{p+1}
\end{equation}
and
\begin{equation}\label{eq:Ne2}
r\|\n\Phi\|_2^2 = 2q \int_\O \Phi |u|^r.
\end{equation}

Hence, combining \eqref{eq:Pohoc}, \eqref{eq:Ne1} and \eqref{eq:Ne2}, we have
\[
\frac{r-5}{4}\|\n\Phi\|_2^2+\frac{5-p}{2(p+1)} \|u\|_{p+1}^{p+1} + \frac{1}{2} \int_{\partial\O} \left| \frac{\partial u}{\partial {\bf n}}\right|^2 x\cdot {\bf n}  + \frac{1}{4}  \int_{\partial\O} \left| \frac{\partial \Phi}{\partial {\bf n}}\right|^2 x\cdot {\bf n} = 0
\]
and we get a contradiction.

\end{document}